\numberwithin{equation}{section}
\newtheorem{theorem}{Theorem}[section]
\newtheorem{lemma}[theorem]{Lemma}
\newtheorem{proposition}[theorem]{Proposition}
\newtheorem{corollary}[theorem]{Corollary}
\theoremstyle{definition}
\newtheorem{definition}[theorem]{Definition} 
\newtheorem{remark}[theorem]{Remark}
\begin{document}


\newcommand{\m}[1]{\marginpar{\addtolength{\baselineskip}{-3pt}{\footnotesize
\it #1}}}
\newcommand{\A}{\mathcal{A}}
\newcommand{\Z}{\mathbb{Z}}
\newcommand{\N}{\mathbb{N}}
\newcommand{\R}{\mathbb{R}}


\title{Toric ideals generated by circuits}

\author{Jos\'e Mart\'{\i}nez-Bernal}
\address{
Departamento de
Matem\'aticas\\
Centro de Investigaci\'on y de Estudios Avanzados del
IPN\\
Apartado Postal
14--740 \\
07000 Mexico City, D.F. } \email{jmb@math.cinvestav.mx}

\author{Rafael H. Villarreal}
\address{
Departamento de
Matem\'aticas\\
Centro de Investigaci\'on y de Estudios
Avanzados del
IPN\\
Apartado Postal
14--740 \\
07000 Mexico City, D.F.
}
\email{vila@math.cinvestav.mx}
\urladdr{http://www.math.cinvestav.mx/$\sim$vila/}
\thanks{The second author was partially supported by CONACyT 
grant 49251-F and SNI}
\keywords{Circuits, toric ideal, normal configuration, edge subring,
multigraph} 
\subjclass[2000]{13H10, 13B22, 13F20, 05B35.}

\begin{abstract} Let $I_\A$ be the toric ideal of a homogeneous normal
configuration $\mathcal{A}\subset\mathbb{Z}^n$. We prove that $I_\A$ 
is generated by circuits if and only if each unbalanced circuit of
$I_\A$ has a {\it connector\/} which is a linear combination of circuits with a
square-free term. In particular, if each circuit of $I_\A$ 
with non-square-free terms is balanced, then $I_\A$ is generated by
circuits. As a consequence we
prove that the toric ideal of a normal edge subring of a multigraph is
generated by circuits with a square-free term.
\end{abstract}

\maketitle

\section{Introduction}

Let $\A$ be a finite {\it vector configuration\/} in $\mathbb{R}^n$
and let $I_\A$ be its 
associated {\it toric ideal\/}, i.e., $\A=\{v_1,\ldots,v_q\}\subset\Z^n$ and
$I_\A$ is the prime ideal of $K[T]$ given by (see \cite{Stur1}):
$$I_\A=\left(
T^a-T^b\vert\, a=(a_i),b=(b_i)\in\N^q,\textstyle\sum_ia_iv_i=\sum_i
b_iv_i\right)\subset K[T],$$ 
where $K[T]=K[T_1,\ldots,T_q]$ is a polynomial ring over a field $K$.
Here we will 
use  $T^a$ as
an abbreviation for  $T_1^{a_1} \cdots T_q^{a_q}$, 
where $a=(a_i)$ is a vector in 
$\mathbb{N}^q$. Let $A$ be the matrix with column vectors
$v_1,\ldots,v_q$ and let $V$ be the kernel of $A$ in $\mathbb{Q}^q$.
An integral vector $0\neq \alpha\in V$ is called a {\it circuit} of $V$ 
if $\alpha$ has minimal support and its non-zero entries are
relatively prime. A binomial $T^a-T^b$ is called a {\it circuit} of $I_\A$ 
if $a-b$ is a circuit of $V$ and $\gcd\{T^a, T^b\}=1$.  
In Section~\ref{section-toric} the
notion of a circuit is discussed in more detail. 
The number of circuits of $V$ is 
finite \cite[Corollary~8.4.5]{monalg}
and the circuits of $V$
generate $V\cap\mathbb{Z}^n$ as a group \cite[Theorem~8.4.10]{monalg}. 
The notion of a {\it circuit\/} occurs in convex analysis 
\cite{Rock}, in the theory of toric ideals of graphs 
\cite{BGS,Stur1,Vi3}, and in matroid theory \cite{oxley}. The ideal
generated by the circuits of $I_\A$ has been studied in \cite{BJT}. 
We are interested in configurations whose toric ideal $I_\A$ is
generated by circuits. The best known examples of toric ideals
generated by circuits come from configurations whose matrix $A$ is
unimodular \cite{Stur1}. In Corollary~\ref{another-corollary} we
present a larger family of toric ideals generated by circuits. 
Another interesting family of examples of toric ideals generated by 
circuits are the phylogenetic ideals studied in \cite{sonja}. As noted
in \cite{sonja1}, these phylogenetic ideals actually represent the
family of cut ideals of cycles. Toric ideals have been widely studied
from various points of view and have interesting connections 
with commutative algebra, geometry, integer programming and graph
theory \cite{BMT,Ful1,Stur1,monalg}.

The configuration $\A$
is {\it normal} if $\N\A=\Z\A\cap\mathbb{R}_+\A$, and {\it
homogeneous\/} if $\A$ lies on an affine hyperplane 
in $\R^n$ not containing the origin. As usual, 
$\mathbb{N}\mathcal{A}$ (resp. $\mathbb{Z}\mathcal{A}$) denotes the
semigroup (resp. subgroup) of $\mathbb{Z}^n$ generated by $\mathcal{A}$, and
$\mathbb{R}_+\mathcal{A}$ denotes the cone generated by 
$\mathcal{A}$ consisting of the linear combinations of $\mathcal{A}$
with coefficients in $\mathbb{R}_+=\{x\in\mathbb{R}\vert\,x\geq 0\}$.
A binomial $T^a-T^b$ has a {\it square-free term} if $T^a$ is square-free or 
$T^b$ is square-free.
The main result of this paper is Theorem~\ref{maintheorem}. It shows
that the toric ideal $I_\A$ of a homogeneous normal
 configuration is generated by circuits if and only if
every unbalanced  
circuit of $I_\A$ has a {\it connector\/} which is a
$K[T]$-linear combination of circuits of $I_\A$ with a square-free term. 
See Definition~\ref{balanced} for a precise definition of the notion
of a balanced 
circuit as well as for that of a connector. In particular, if
$\mathcal{A}$ is homogeneous, normal and each circuit of $I_\A$ 
with non-square-free terms is balanced, then $I_\A$ is generated by
circuits (see Corollary~\ref{main-corollary}).  
As an application we prove that normal edge ideals of 
multigraphs are generated by circuits with a square-free term (see
Theorem~\ref{toric-gen-by-circuits}).

\section{Circuits of Toric ideals}\label{section-toric}
Let $\mathcal{A}=\{v_1,\ldots,v_q\}\subset\mathbb{Z}^n$ be a finite vector
configuration, let $A$ be the integral matrix with column vectors
$v_1,\ldots,v_q$, and let $I_\A$ be the associated toric ideal of
$\mathcal{A}$. For $\alpha=(\alpha_i)\in {\mathbb R}^q$, its {\it
support\/} is defined as
$${\rm supp}(\alpha)=\{i\, |\, \alpha_i\neq 0\}.$$
Note that $\alpha=\alpha_+-\alpha_-$, 
where $\alpha_+$ and $\alpha_-$ are two non-negative vectors 
with disjoint support. 

\begin{definition}\rm Let $V=\{\alpha\vert\, A\alpha=0\}$ be the kernel
of $A$ in $\mathbb{Q}^q$. A {\it circuit\/} of $A$ is a 
non-zero integral vector 
$\alpha$ in $V$ whose support is minimal with
respect to inclusion (i.e., ${\rm supp}(\alpha)$ does not properly
contains the support of any other non-zero vector in $V$) and such 
that the non-zero entries of $\alpha$ are relatively prime. A circuit
of $A$ is also called a circuit of $V$. If
$\alpha$ is a circuit of $A$, we call the binomial
$T^{\alpha_+}-T^{\alpha_-}$ a {\it circuit\/} of the toric ideal $I_\A$.
\end{definition}

Let $E=\{1,\ldots,q\}$ be the set of column labels of the matrix $A$
and let $\mathcal I$ be 
the set of 
subsets $B$ of $E$ for which the
multiset of columns labeled by $B$ is linearly independent in
$\mathbb{Q}^n$. Then $M[A]=(E,{\mathcal I})$ is a {\it matroid\/} on $E$ 
by \cite[Proposition~1.1.1, p.~8]{oxley}, i.e., the collection
$\mathcal I$ satisfies the following three conditions:
\begin{enumerate}
\item[$(\mathrm{i}_1)$] $\emptyset\in \mathcal I$.

\item[$(\mathrm{i}_2)$] If $I\in\mathcal I$ and $I'\subset I$, 
then $I'\in
\mathcal I$.

\item[$(\mathrm{i}_3)$] If $I_1$ and $I_2$ are in $\mathcal I$ and 
$|I_1|<|I_2|$, then there is an element $e$ of $I_2\setminus I_1$ such
that $I_1\cup\{e\}\in \mathcal I$.
\end{enumerate} 

The matroid $M[A]$ is called the {\it vector matroid\/} of $A$ over
the field $\mathbb{Q}$. The reader is referred to \cite{oxley} for a 
general theory of matroids. The members of $\mathcal I$ are the {\it
independent} 
sets of $M[A]$.
A subset of 
$E$ that is not in $\mathcal I$ is called {\it
dependent\/}.  A minimal dependent set 
in $M[A]$ is called a {\it circuit\/} of $M[A]$. 
It is interesting to observe that there is a correspondence
$$
\begin{array}{ccc}
\mbox{Circuits of } A&\longrightarrow & \mbox{ Circuits of }
M[A]\\
\alpha=(\alpha_1,\ldots,\alpha_q)&\longmapsto &
{\rm supp}(\alpha).
\end{array}
$$
Thus the set of circuits of $A$ is an algebraic
realization of the set of circuits of the vector matroid 
$M[A]$.

\begin{definition}\rm Two vectors 
$\alpha=(\alpha_i)$ and $\beta=(\beta_i)$ 
in ${\mathbb Q}^q$ are in {\it harmony\/} 
if $\alpha_i\beta_i\geq 0$ for all $i$. 
\end{definition}

\begin{lemma}[\cite{Rock}]\label{harmonyl} Let $V$ be the kernel of
$A$ in $\mathbb{Q}^q$. 
If $0\neq\alpha\in V$, then there is a circuit $\gamma\in V$ 
in harmony with $\alpha$ such that 
${\rm supp}(\gamma)\subset {\rm supp}(\alpha)$.  
\end{lemma}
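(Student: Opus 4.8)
The plan is to carry out a support-minimization argument inside the ``harmony cone'' of $\alpha$, which is the standard route to this classical fact from convex analysis. Consider the set
\[
\mathcal H=\{\beta\in V\setminus\{0\}:\ \beta\text{ is in harmony with }\alpha\text{ and }\mathrm{supp}(\beta)\subseteq\mathrm{supp}(\alpha)\},
\]
which is nonempty because $\alpha\in\mathcal H$. Pick $\gamma'\in\mathcal H$ whose support is minimal with respect to inclusion among all members of $\mathcal H$. I will first show that $\mathrm{supp}(\gamma')$ is actually minimal among the supports of \emph{all} nonzero vectors of $V$, and then normalize $\gamma'$ to a genuine circuit.

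For the minimality claim, suppose there were $0\neq\delta\in V$ with $\mathrm{supp}(\delta)\subsetneq\mathrm{supp}(\gamma')$. For $t\in\R$ the vector $\gamma'-t\delta$ lies in $V$ and, since $\delta_i=0$ whenever $\gamma'_i=0$, has support contained in $\mathrm{supp}(\gamma')$; for $|t|$ small its coordinates have the same signs as those of $\gamma'$, hence it is still in harmony with $\alpha$. Every $i\in\mathrm{supp}(\delta)$ satisfies $\gamma'_i\delta_i\neq0$, so after replacing $\delta$ by $-\delta$ if necessary the set $S=\{i\in\mathrm{supp}(\delta):\gamma'_i\delta_i>0\}$ is nonempty; put $t_0=\min\{\gamma'_i/\delta_i:i\in S\}>0$ and $\gamma''=\gamma'-t_0\delta$. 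A short sign check shows that $\gamma''$ is still in harmony with $\alpha$, has support inside $\mathrm{supp}(\gamma')$, and vanishes at the index attaining $t_0$; and since $\mathrm{supp}(\delta)\subsetneq\mathrm{supp}(\gamma')$ there is $k\in\mathrm{supp}(\gamma')\setminus\mathrm{supp}(\delta)$ with $\gamma''_k=\gamma'_k\neq0$, so $\gamma''\neq0$. Thus $\gamma''\in\mathcal H$ with $\mathrm{supp}(\gamma'')\subsetneq\mathrm{supp}(\gamma')$, contradicting the choice of $\gamma'$.

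It remains to replace $\gamma'$ by an integral, primitive vector. The subspace $W=\{\beta\in V:\mathrm{supp}(\beta)\subseteq\mathrm{supp}(\gamma')\}$ must be one-dimensional, for two linearly independent nonzero elements would admit a nontrivial linear combination of strictly smaller support, again contradicting minimality of $\mathrm{supp}(\gamma')$. Since $W$ is the intersection of the rational subspace $V$ with a coordinate subspace, it is rational and hence contains an integral vector $\gamma$ with relatively prime nonzero entries, determined up to sign; choosing the sign that makes $\gamma$ a positive multiple of $\gamma'$, the vector $\gamma$ is a circuit of $V$, is in harmony with $\alpha$, and satisfies $\mathrm{supp}(\gamma)=\mathrm{supp}(\gamma')\subseteq\mathrm{supp}(\alpha)$, as required.

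The only delicate point is the harmony-preserving reduction in the middle paragraph: one must flip the sign of $\delta$ appropriately and take the \emph{smallest} positive ratio $t_0$, so that one more coordinate is driven to zero while no coordinate changes sign, and then confirm the outcome is still nonzero (which is exactly where $\mathrm{supp}(\delta)\subsetneq\mathrm{supp}(\gamma')$, rather than just $\subseteq$, is used). The nonemptiness of $\mathcal H$, the one-dimensionality of $W$, and the integrality/primitivity normalization are all routine.
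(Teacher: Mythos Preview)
The paper does not give its own proof of this lemma; it simply cites Rockafellar's paper. Your argument is a correct, self-contained proof of the result and is essentially the classical support-minimization argument one finds in that literature: pick a harmony-preserving vector of minimal support, show by a ratio-elimination step that no nonzero vector of $V$ can have strictly smaller support, and then normalize using the one-dimensionality of the resulting subspace. The sign analysis in the middle paragraph checks out (for indices in $S$ the ratio condition forces $\gamma''_i$ to retain the sign of $\gamma'_i$ or vanish; for indices with $\gamma'_i\delta_i<0$ the subtraction only pushes $\gamma''_i$ further in the direction of $\gamma'_i$; outside $\mathrm{supp}(\delta)$ nothing changes), and the properness of $\mathrm{supp}(\delta)\subsetneq\mathrm{supp}(\gamma')$ is indeed exactly what guarantees $\gamma''\neq 0$. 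So your proposal is sound and fills in what the paper leaves as a citation.
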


A non-zero binomial $T^a-T^b$ is said to have a {\it square-free term} if
$a_i\in\{0,1\}$ for all $i$ or $b_i\in\{0,1\}$ for all $i$. If 
$a_i\notin\{0,1\}$ for some $i$ and $b_j\notin\{0,1\}$ for some $j$,
we say that the binomial $T^a-T^b$ has {\it non-square-free terms}.

\begin{lemma}{\cite[Proposition~4.1]{birational}}\label{square-free-term}
If $\A$ is homogeneous, normal, and $I_\A$ is minimally generated by a
finite set $\mathcal{B}$ consisting of binomials, then every element
of $\mathcal{B}$ has a square-free term.
\end{lemma}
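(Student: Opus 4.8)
The plan is to argue by contradiction: suppose some $f=T^a-T^b\in\mathcal B$ has non--square-free terms, and show that then $f$ already lies in $\mathfrak m\,I_\A$, where $\mathfrak m=(T_1,\dots,T_q)$; this is impossible for an element of a minimal generating set and so would finish the proof. Before anything else I would record the consequence of homogeneity that makes this strategy work: fixing $\omega$ with $\langle\omega,v_i\rangle=1$ for every $i$, a binomial $T^a-T^b$ lies in $I_\A$ iff $\sum_i a_iv_i=\sum_i b_iv_i$, and applying $\langle\omega,\cdot\rangle$ forces $\sum_i a_i=\sum_i b_i$. Hence every binomial of $I_\A$ is homogeneous for the standard grading, $\mathcal B$ consists of homogeneous binomials, $I_\A$ is a graded ideal, and a homogeneous element lying in $\mathfrak m\,I_\A$ may be deleted from any homogeneous system of generators of $I_\A$ (graded Nakayama); so exhibiting $f\in\mathfrak m\,I_\A$ indeed contradicts minimality of $\mathcal B$.

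Next I would make two cheap reductions. If $a_i\geq 1$ and $b_i\geq 1$ for a common index $i$, then $T^a-T^b=T_i(T^{a-e_i}-T^{b-e_i})$ with $T^{a-e_i}-T^{b-e_i}\in I_\A$ (the two monomials have the same image under $\A$), so $f\in\mathfrak m\,I_\A$ and we are done; thus we may assume $a$ and $b$ have disjoint supports. Since $T^a$ is not square-free there is $k$ with $a_k\geq 2$, and since $T^b$ is not square-free there is $l$ with $b_l\geq 2$; disjointness of supports forces $k\neq l$. Write $\beta=\sum_i a_iv_i=\sum_i b_iv_i\in\N\A$.

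The heart of the argument is to produce $c\in\N^q$ with $\sum_i c_iv_i=\beta-v_k-v_l$, and this is exactly where normality is used. Because $a_k\geq 2$ we have $\beta-2v_k=(a_k-2)v_k+\sum_{i\neq k}a_iv_i\in\N\A$, and symmetrically $\beta-2v_l\in\N\A$ because $b_l\geq 2$. Averaging gives $\beta-v_k-v_l=\tfrac12\big((\beta-2v_k)+(\beta-2v_l)\big)\in\R_+\A$, while trivially $\beta-v_k-v_l\in\Z\A$; by the defining equality $\N\A=\Z\A\cap\R_+\A$ this vector lies in $\N\A$, producing the desired $c$.

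Finally I would set $a'=c+e_k+e_l$, so that $\sum_i a'_iv_i=\beta$ and $a'_k,a'_l\geq 1$. Then $T^a-T^{a'}=T_k(T^{a-e_k}-T^{a'-e_k})\in\mathfrak m\,I_\A$ (using $a_k\geq 1$, $a'_k\geq 1$, and that both monomials have image $\beta-v_k$), and likewise $T^{a'}-T^b=T_l(T^{a'-e_l}-T^{b-e_l})\in\mathfrak m\,I_\A$. Adding, $f=T^a-T^b\in\mathfrak m\,I_\A$, the contradiction sought, so every element of $\mathcal B$ must have a square-free term. The only step that is not pure exponent-vector bookkeeping is the middle one: the averaging identity $\beta-v_k-v_l=\tfrac12((\beta-2v_k)+(\beta-2v_l))$ is what lets normality promote ``$\beta-v_k-v_l$ lies in the cone $\R_+\A$'' to ``$\beta-v_k-v_l$ lies in the semigroup $\N\A$'', and I expect that to be the main point to get right.
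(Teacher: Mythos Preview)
Your argument is correct. The paper does not give its own proof of this lemma; it simply quotes the result from \cite[Proposition~4.1]{birational}, so there is no in-paper argument to compare against. Your approach is self-contained: the averaging identity $\beta-v_k-v_l=\tfrac12\big((\beta-2v_k)+(\beta-2v_l)\big)$ places the vector in $\R_+\A$, normality then promotes it to $\N\A$, and the resulting monomial $T^{a'}=T_kT_lT^c$ splits $f$ as a sum of two binomials each divisible by a variable, giving $f\in\mathfrak m I_\A$ and contradicting minimality by graded Nakayama. For what it is worth, when normality is invoked elsewhere in the paper (in the proof of Theorem~\ref{maintheorem}) it is used in its integral-closure form---showing some Laurent monomial has a power in $K[F]$ and concluding the monomial itself lies in $K[F]$---rather than via the semigroup equality $\N\A=\Z\A\cap\R_+\A$ that you use; the two formulations are of course equivalent.
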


\begin{definition}\label{balanced} 
A binomial $g=T_1^{a_1}\cdots T_q^{a_q}-T_{1}^{b_1}\cdots T_q^{b_q}$ 
is called {\it balanced} if the following holds:
$$
\max\{a_1,\ldots,a_q\}=\max\{b_1,\ldots,b_q\}.
$$ 
If $g$ is not balanced it is called {\it unbalanced}. Let $g$ be an
unbalanced binomial of the form:
\[g=T_{1}^{b_1}\cdots
T_{r}^{b_r}-T_{r+1}^{b_{r+1}}\cdots T_{s}^{b_s},\ \ \ b_i\geq
1\,\forall i,\]
where $1\leq
m_1=\max\{b_1,\ldots,b_r\}<\max\{b_{r+1},\ldots,b_s\}=m_2$. A {\it
connector} of $g$ is a binomial: 
\[T_{i_1}\cdots T_{i_j}-T_{i_{j+1}}^{c_{j+1}}\cdots T_{i_m}^{c_m}
,\ \ \ c_i\geq
1\,\forall i,\]
with a square-free term $T_{i_1}\cdots T_{i_j}$ such that
$\{i_1,\ldots,i_j\}\subset\{1,\ldots,r\}$ and the intersection of 
$\{i_{j+1},\ldots,i_m\}$ with $\{r+1,\ldots,s\}$ is non-empty. 
\end{definition}

We come to the main result of this paper.

\begin{theorem}\label{maintheorem} Let $\A$ be a homogeneous normal
configuration and let $I_\A$ be its toric ideal. The following are
equivalent{\rm :} 
\begin{enumerate}
\item[\rm (a)] $I_\A$ is generated by a finite set of circuits.
\item[\rm (b)] $I_\A$ is generated by a finite set of 
circuits with a square-free term.
\item[\rm (c)] Every unbalanced circuit of $I_\A$ has a connector which is a
linear combination {\rm(}with coefficients in $K[T]${\rm)} of
circuits of $I_\A$ with a square-free term.  
\end{enumerate}
\end{theorem}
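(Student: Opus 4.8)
The plan is to prove the cycle of implications (b) $\Rightarrow$ (a) $\Rightarrow$ (c) $\Rightarrow$ (b), so that the only substantive work is in (c) $\Rightarrow$ (b). The implication (b) $\Rightarrow$ (a) is immediate, since circuits with a square-free term are in particular circuits. For (a) $\Rightarrow$ (c), suppose $I_\A$ is generated by circuits. Since $\A$ is homogeneous and normal, Lemma~\ref{square-free-term} forces any minimal binomial generating set to consist of binomials with a square-free term; combining this with (a) one extracts a generating set $\mathcal{C}$ consisting of circuits with a square-free term (any circuit without a square-free term, being non-minimal, lies in the ideal generated by the others, hence by circuits, and one can iterate/prune). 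Now given an unbalanced circuit $g=T^{b_+}-T^{b_-}$ with $m_1=\max$ over the square-free-looking side $<m_2=\max$ over the other side, write $g=\sum_k h_k c_k$ with $c_k\in\mathcal{C}$. The term of $g$ of smallest total degree, namely the square-free monomial $T_1\cdots T_r$ appearing on the low side (after stripping exponents down — here one uses that $g$ is itself a circuit so one side is genuinely square-free), must be matched by some $h_k c_k$; the corresponding $c_k$, possibly after multiplying by the appropriate monomial $h_k$, yields a binomial whose square-free term divides $T_1\cdots T_r$ and whose other term must involve a variable from $\{r+1,\dots,s\}$ for degree/support reasons. This is the connector, and it is visibly a $K[T]$-linear combination of circuits with a square-free term (a single such circuit times a monomial).

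The heart of the paper is (c) $\Rightarrow$ (b). Here I would argue by contradiction using a well-ordering. Suppose $I_\A$ is \emph{not} generated by circuits with a square-free term. Fix a generating set and, among all binomials in $I_\A$ not lying in the ideal $J$ generated by circuits with a square-free term, choose one that is minimal with respect to a suitable term order refining total degree (say a degree-lexicographic order). Call it $f=T^a-T^b$. By homogeneity both monomials have the same degree, and by Lemma~\ref{square-free-term} we may assume $f$ is an element of a minimal generating set, hence has a square-free term, say $T^a=T_{i_1}\cdots T_{i_k}$ square-free. By Lemma~\ref{harmonyl} applied to $a-b\in V$, there is a circuit $\gamma$ in harmony with $a-b$ with $\mathrm{supp}(\gamma)\subset\mathrm{supp}(a-b)$; write the corresponding circuit binomial $c=T^{\gamma_+}-T^{\gamma_-}$ with $\gamma_+$ supported in $\mathrm{supp}(a)$ and $\gamma_-$ in $\mathrm{supp}(b)$. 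Because $T^a$ is square-free, $T^{\gamma_+}$ divides $T^a$ and is itself square-free, so $c$ is a circuit with a square-free term, hence $c\in J$. Now the standard binomial reduction: $T^a-T^b = T^{a-\gamma_+}(T^{\gamma_+}-T^{\gamma_-}) + (T^{a-\gamma_+}T^{\gamma_-}-T^b)$, and the second binomial $f'$ lies in $I_\A$, is again homogeneous of the same degree, and — this is the key point — has strictly smaller leading term than $f$ in the chosen order, because $T^{a-\gamma_+}T^{\gamma_-}$ is obtained from $T^a$ by lowering the exponents on $\mathrm{supp}(\gamma_+)$ and raising on $\mathrm{supp}(\gamma_-)\subset\mathrm{supp}(b)$, a strictly order-decreasing move provided the order is chosen compatibly (reverse-lex on the relevant variables, or exploit that the "high" variables of $b$ get more weight). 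By minimality of $f$, $f'\in J$, and since the first summand is a monomial multiple of $c\in J$, we get $f\in J$, a contradiction. When $f$ is \emph{balanced} the harmony circuit already does the job directly; when $f$ is \emph{unbalanced}, this is exactly where hypothesis (c) enters: the connector provided by (c) is a binomial with square-free term lying in $J$ that realizes precisely the reduction step matching the square-free side $T_1\cdots T_r$ of $f$ to something touching the high side, guaranteeing the strict decrease.

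The main obstacle — and the point demanding the most care — is arranging the term order (or descent measure) so that the reduction genuinely decreases it in \emph{all} cases, including the subtle unbalanced case where a naive harmony circuit might fail to lower the maximum exponent on the heavy side; this is the precise reason the notion of \emph{connector} is engineered as it is in Definition~\ref{balanced}, with its square-free low part inside $\{1,\dots,r\}$ and its heavy part forced to meet $\{r+1,\dots,s\}$. I would therefore spend the bulk of the proof (i) setting up the descent invariant — likely the pair (total degree of a minimal generator, then the multiset of exponents on the "max-achieving" variables ordered reverse-lexicographically), (ii) verifying that for balanced binomials the harmony circuit of Lemma~\ref{harmonyl} strictly decreases it, and (iii) verifying that for unbalanced binomials the connector from (c), which by hypothesis is a $K[T]$-combination of square-free-term circuits hence lies in $J$, supplies the needed strictly-decreasing reduction. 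The remaining bookkeeping — normality and homogeneity are used only through Lemma~\ref{square-free-term}, and finiteness of the circuit set through the cited results in \cite{monalg} — is routine.
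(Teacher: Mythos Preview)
Your proposal has the right skeleton (harmony lemma, Lemma~\ref{square-free-term}, a descent), but the central reduction step contains a genuine error. In (c) $\Rightarrow$ (b) you write: ``Because $T^a$ is square-free, $T^{\gamma_+}$ divides $T^a$ and is itself square-free.'' This is false: Lemma~\ref{harmonyl} guarantees only that $\gamma_+$ is \emph{supported} in $\mathrm{supp}(a)$; its entries may well exceed $1$, so the identity $f = T^{a-\gamma_+}c + f'$ need not make sense. The paper confronts exactly this obstruction with a case analysis on the harmony circuit $g$. When one side of $g$ is square-free the direct reduction works. When neither side of $g$ is square-free and $m_1<m_2$, hypothesis (c) is applied to the \emph{circuit} $g$ (not to $f$, which is typically not a circuit, so your invocation of (c) for ``$f$ unbalanced'' is misplaced): the connector's square-free part sits inside $\{1,\dots,r\}\subset\mathrm{supp}(a)$ and therefore genuinely divides $T^a$. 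When neither side is square-free and $m_1\ge m_2$, the paper uses normality \emph{directly}: from $(x^{v_{p+1}}\cdots x^{v_s}/x^{v_1})^{m_2}\in K[F]$ one extracts an auxiliary binomial $h_1=T_{p+1}\cdots T_s - T_1T^\gamma\in I_\A$ that reduces $f$ from the $b$-side. So your closing claim that normality is used only through Lemma~\ref{square-free-term} is wrong, and the descent is by \emph{degree} (every subcase produces binomials of strictly smaller degree), not by a term order.

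The same error recurs in your (a) $\Rightarrow$ (c): you assert that an unbalanced circuit has ``one side genuinely square-free,'' but $m_1<m_2$ permits $m_1\ge 2$, and in a homogeneous binomial both terms have the same degree, so ``the term of $g$ of smallest total degree'' is not a well-defined selector. The paper does not try to read a connector off an expansion of $g$; it again uses normality directly: since $m_1(v_1+\cdots+v_r-v_{r+1})=\sum_{i\le r}(m_1-b_i)v_i+(b_{r+1}-m_1)v_{r+1}+\sum_{j>r+1}b_jv_j\in\mathbb{N}\mathcal{A}$, one gets $(x^{v_1}\cdots x^{v_r}/x^{v_{r+1}})^{m_1}\in K[F]$, hence by normality $T_1\cdots T_r - T_{r+1}T^\gamma\in I_\A$; this is the connector, and by (b) it is a $K[T]$-combination of square-free-term circuits.
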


\begin{proof} Let $R=k[x_1^{\pm1},\ldots,x_n^{\pm1}]$ be the ring 
of Laurent polynomials over a field $K$ 
and let $K[F]$ be the subring
of $R$ generated by $F=\{x^{v_1},\ldots,x^{v_q}\}$. Since $\mathcal{A}$ is
normal, we get that $K[F]$ is normal, i.e., 
$\overline{K[F]}=K[F]$, where $\overline{K[F]}$ 
is the integral closure of $K[F]$ in its field of fractions. As
$\mathcal{A}$ is homogeneous, we get that any binomial $T^a-T^b$ 
in $I_\A$ is homogeneous with respect to the standard grading of
$K[T]=K[T_1,\ldots,T_q]$ induced by setting $\deg(T_i)=1$ for all $i$, 
a fact that will be used repeatedly below without any
further notice.  

(a) $\Rightarrow$ (b): The toric ideal $I_\A$ is minimally generated by 
a finite set $\mathcal{B}$ of circuits. Thus, by
Lemma~\ref{square-free-term}, each binomial of $\mathcal{B}$ is a
circuit with a square-free term. 

$(b) \Rightarrow (c)$: Let $g$ be an unbalanced circuit of $A$:
\[g=T_{1}^{b_1}\cdots T_{r}^{b_r}-T_{r+1}^{b_{r+1}}\cdots
T_{s}^{b_s},\ \ \ b_i\geq 1\,\forall i,\] 
where $1\leq
m_1=\max\{b_1,\ldots,b_r\}<\max\{b_{r+1},\ldots,b_s\}=m_2$. We may
assume $m_2=b_{r+1}$. Then 
\begin{equation}\label{nov30-08}
(x^{v_1}\cdots x^{v_r}/x^{v_{r+1}})^{m_1}\in K[F].
\end{equation}
The element $x^{v_1}\cdots x^{v_r}/x^{v_{r+1}}$ is in the field of
fractions of $K[F]$ and by Eq.~(\ref{nov30-08}) it is integral over
$K[F]$. Hence, by the normality of $K[F]$, the element 
$x^{v_1}\cdots x^{v_r}/x^{v_{r+1}}$ is in $K[F]$. Since $K[F]$ is
generated as a $K$-vector space by Laurent monomials of the form
$x^a$, with $a\in\mathbb{N}\mathcal{A}$, it is not hard to see that
there is a monomial $T^\gamma$ such that
\[T_{1}\cdots T_r-T_{r+1}T^\gamma\in I_\A.\]
This is a connector of $g$ and by hypothesis it is a linear combination
of circuits of $I_\A$ with a square-free term. 

(c) $\Rightarrow$ (a): By Lemma~\ref{square-free-term}, the toric ideal $I_\A$
is minimally generated by a finite set 
$$\mathcal{B}=\{f_1,\ldots,f_m\}$$
consisting of 
binomials with a square-free term. We will show, by induction on the
degree, that each one of the $f_i$'s is a linear
combination of circuits. The degree is taken with respect to the
standard grading of $K[T]$.

Let $f$ be a binomial in $\mathcal{B}$. 
We may assume that $f$
has the form:
\[f=T_1\cdots T_p-T_{p+1}^{a_{p+1}}\cdots T_\ell^{a_\ell} ,\ \ \ 
a_i\geq 1\,\forall i, \ \ell\leq q.\]
Assume that $f$ is not a circuit. Then by Lemma~\ref{harmonyl} 
there is a circuit in $I_\A$ (permuting variables if
necessary) of the form
\[g=T_1^{b_1}\cdots T_r^{b_r}-T_{p+1}^{b_{p+1}}\cdots T_s^{b_s},\ \ \
b_i\geq 1\, \forall i,\]
with $r<p$ or $s<\ell$. We set $m_1=\max\{b_1,\ldots,b_r\}$ and
$m_2=\max\{b_{p+1},\ldots,b_s\}$. 

We claim that there exist binomials $h$ and $h_1$ (we allow $h=h_1$
or $h=0$) 
in $I_\A$  of degree less than $\deg(f)=p$ and a binomial $h_2$ which is a
linear combination of circuits of $I_\A$ such that $f$ is in the
ideal of $K[T]$ generated by $g,h,h_1,h_2$. To prove
this we consider the following two cases.

\underline{Case} (A): $r=p$ and $s<\ell$. Then
\begin{equation}\label{referee1}
g=T_1^{b_1}\cdots T_p^{b_p}-T_{p+1}^{b_{p+1}}\cdots T_s^{b_s}.
\end{equation}

\underline{Subcase} ($\mathrm{A}_1$): $b_i=1$ for $i=1,\ldots,p$. Then we can
write 
\[f-g=T_{p+1}^{b_{p+1}}\cdots T_s^{b_s}-T_{p+1}^{a_{p+1}}\cdots
T_\ell^{a_\ell}=T_{p+1}h,\]
for some binomial $0\neq h\in I_\A$ (recall that $I_\A$ is a prime
ideal) with $\deg(h)<\deg(f)=p$.

\underline{Subcase} ($\mathrm{A}_2$): $b_i=1$ for $i=p+1,\ldots,s$. Then
\[g=T_1^{b_1}\cdots T_p^{b_p}-T_{p+1}\cdots T_s.\]
By subcase ($\mathrm{A}_1$), we may assume that $b_i\geq 2$ for some
$1\leq i\leq p$. Then, on the one hand, by the homogeneity of $g$, 
$p+1\leq\sum_{i=1}^p b_i=s-p$, so $2p+1\leq s$. On the other hand,
by the homogeneity of $f$, $p\geq \ell-p\geq s-p+1$, so $2p-1\geq s$.
This is a contradiction. So this case cannot occur.

\underline{Subcase} ($\mathrm{A}_3$): $b_i\geq 2$ for some $1\leq
i\leq p$, $b_{p+j}\geq
2$ for some $1\leq j\leq s-p$, 
and $m_1\geq m_2$. Then
\[f=T_1\cdots T_p-T_{p+1}^{a_{p+1}}\cdots T_\ell^{a_\ell},\]
\[g=T_1^{b_1}\cdots T_p^{b_p}-T_{p+1}^{b_{p+1}}\cdots T_s^{b_s}.\]
For simplicity of notation we may
assume that $m_1=b_1$. Using that $g\in I_\A$ and $m_1\geq m_2\geq 2$, we
get 
$$
(x^{v_{p+1}}\cdots x^{v_s}/x^{v_1})^{m_2}\in
K[F].
$$
Hence, by the normality of $K[F]$, there is a
monomial $T^\gamma$ such that the binomial  
\[h_1=T_{p+1}\cdots T_s-T_1T^\gamma\]
is in $I_\A$. The binomial $h_1$ is non-zero and has degree 
less than $\deg(f)$ because $s-p<\ell-p\leq p$; 
the second inequality follows from the homogeneity
of $f$. 
Let
\[T^\delta=T_{p+1}^{a_{p+1}}\cdots T_\ell^{a_\ell}/T_{p+1}\cdots T_s.\]
We have
\begin{equation}\label{referee2}
f+h_1T^\delta=f+(T_{p+1}\cdots T_s-T_1T^\gamma)T^\delta=T_1\cdots
T_p-T_1T^\gamma T^\delta 
=T_1h,
\end{equation}
where $0\neq h\in I_\A$ and $\deg(h)<p$. 

\underline{Subcase} ($\mathrm{A}_4$): $b_i\geq 2$ for some $1\leq
i\leq p$, $b_{p+j}\geq
2$ for some $1\leq j\leq s-p$, and $m_1<m_2$. Since $g$ is an unbalanced circuit, by
hypothesis 
$g$ has a connector 
\[h_2=T_{i_1}\cdots T_{i_k}-T_{i_{k+1}}T^\gamma,\ \ i_1<\cdots<i_k,\]
with $i_{k+1}\in\{p+1,\ldots,s\},\,
\{i_1,\ldots,i_k\}\subset\{1,\ldots,p\}$ and such that $h_2$ is a linear
combination of 
circuits of $I_\A$. Set
\[T^\delta=T_1\cdots T_p/T_{i_1}\cdots T_{i_k}.\]
If $f=h_2T^\delta$, then $T^\delta=1$ and $f=h_2$. 
If $f\neq h_2T^\delta$, then 
we can write
\begin{equation}\label{referee3}
f-h_2T^\delta=T_{i_{k+1}}T^\gamma T^\delta-T_{p+1}^{a_{p+1}}\cdots
T_\ell^{a_\ell}=T_{i_{k+1}}h,
\end{equation}
with $0\neq h\in I_\A$ and $\deg(h)<p$.

\underline{Case} (B): $r<p$ and $s\leq \ell$. In this case

\[f=T_1\cdots T_p-T_{p+1}^{a_{p+1}}\cdots T_\ell^{a_\ell},\]
\[g=T_1^{b_1}\cdots T_r^{b_r}-T_{p+1}^{b_{p+1}}\cdots T_s^{b_s}.\]

\underline{Subcase} ($\mathrm{B}_1$):  $b_i=1$ for $i=1,\ldots,r$.
Then
\begin{equation}\label{referee4}
f-gT_{r+1}\cdots T_p=T_{p+1}^{b_{p+1}}\cdots T_s^{b_s}T_{r+1}\cdots
T_p-T_{p+1}^{a_{p+1}}\cdots T_\ell^{a_\ell}=T_{p+1}h,
\end{equation} 
where $0\neq h\in I_\A$ and $\deg(h)<p$.

\underline{Subcase} ($\mathrm{B}_2$):  $b_i=1$ for $i=p+1,\ldots,s$. Let
\[T^\gamma=T_{p+1}^{a_{p+1}}\cdots T_\ell^{a_\ell}/T_{p+1}\cdots
T_s.\]
Then we have
\begin{equation}\label{referee5}
f-gT^\gamma=T_1\cdots T_p-T_1^{b_1}\cdots T_r^{b_r}T^\gamma=T_1h
\end{equation}
where $0\neq h\in I_\A$ and $\deg(h)<p$.

\underline{Subcase} ($\mathrm{B}_3$):  $b_i\geq 2$ for some $1\leq
i\leq r$, $b_j\geq 2$ for some $p+1\leq j\leq s$, and $m_1\leq m_2$. We may assume
$m_2=b_{p+1}$. Using that $g\in I_\A$ and $m_2\geq m_1\geq 2$, we
get 
$$
(x^{v_{1}}\cdots x^{v_r}/x^{v_{p+1}})^{m_1}\in
K[F].
$$
Hence, by the normality of $K[F]$, there is a
monomial $T^\gamma$ such that the binomial  
\[h_1=T_1\cdots T_r-T_{p+1}T^\gamma\]
is in $I_\A$. The binomial $h_1$ is non-zero and has degree 
less than $\deg(f)$ because $r<p$. 
Then we have
\begin{eqnarray}
f-h_1T_{r+1}\cdots T_p&=&
f-(T_1\cdots T_r-T_{p+1}T^\gamma)T_{r+1}\cdots T_p\label{referee6}\\ 
&=&
T_{p+1}T^\gamma T_{r+1}\cdots T_p-T_{p+1}^{a_{p+1}}\cdots
T_\ell^{a_\ell}=T_{p+1}h,\nonumber
\end{eqnarray}
where $0\neq h\in I_\A$ and $\deg(h)<p$. 

\underline{Subcase} ($\mathrm{B}_4$):  $b_i\geq 2$ for some $1\leq
i\leq r$, $b_j\geq 2$ for some $p+1\leq j\leq s$, and 
$m_1>m_2$. Since $g$ is an unbalanced circuit, by hypothesis $g$ has a connector
\[h_2=T_{i_{k+1}}\cdots T_{i_{k+t}}-T_{i_{d}}T^\gamma,\ \ 
i_{k+1}<\cdots<i_{k+t},
\] 
with $\{i_{k+1},\cdots,i_{k+t}\}\subset\{p+1,\ldots,s\}$,
$i_d\in\{1,\ldots,r\}$, and such that $h_2$ is a linear
combination of circuits of $I_\A$. Set
\[T^\delta=T_{p+1}^{a_{p+1}}\cdots T_\ell^{a_\ell}/T_{i_{k+1}}\cdots
T_{i_{k+t}}.\]
If $f=-h_2T^\delta$, then $T^\delta=1$ and $f=-h_2$. If 
$f\neq-h_2T^\delta$, then we can write
\begin{equation}\label{referee7}
f+h_2T^\delta=T_1\cdots T_p-T_{i_d}T^\gamma T^\delta=T_{i_{d}}h,
\end{equation}
with $0\neq h\in I_\A$ and $\deg(h)<p$. This completes the proof of
the claim.

We are now ready to show that each $f_i$ in $\mathcal{B}$ is a linear
combination of circuits. We proceed by induction on $\deg(f_i)$. 
Let $p=\min\{\deg(f_i)\vert\, 1\leq i\leq m\}$ be the initial degree
of $I_\A$. If $f_i$ is a binomial in $\mathcal{B}$ of degree $p$, 
then either $f_i$ is a circuit or $f_i$ is not a circuit and by the
claim $f_i$ is a linear
combination of circuits (notice that in this case $h=h_1=0$ 
because there are no non-zero binomials in $I_\A$ of degree less than
$p$). Let 
$d$ be an integer 
greater than
$p$ and let $f_k$ 
be a binomial of $\mathcal{B}$ of degree $d$ (if any). Assume that each $f_i$ of 
degree less than $d$ is a linear combination of circuits. If $f_k$ is a circuit
there is nothing to prove. If $f_k$ is not a circuit, then by the
claim (or more precisely by Eqs.
(\ref{referee1})--(\ref{referee7})) we
can write
\begin{equation}\label{oct27-08}
f_k={\lambda}g+{\mu}h+\mu_1h_1+\mu_2h_2, 
\end{equation}
where $\lambda,\mu,\mu_1,\mu_2$ are monomials, 
$h,h_1$ are binomials in
$I_\A$ of
degree less than $d=\deg(f_k)$, $h_2$ is a linear combination of
circuits, and $g$ is a circuit. Since $I_\A$ is a graded ideal with
respect to the standard grading of $K[T_1,\ldots,T_q]$, we get that 
$h$ and $h_1$ are linear combinations of binomials in $\mathcal{B}$ of
degree less than $d$. Therefore by Eq.~(\ref{oct27-08}) and the induction
hypothesis, we conclude that $f_k$ is a linear combination of
circuits. Therefore the ideal $I_\A$ is generated by a 
finite set of circuits. 
\end{proof}

\begin{remark}\label{referee-remark} In the proof of 
Theorem~\ref{maintheorem} (from (c) to (a)), the subcases 
($\mathrm{A}_3$),  ($\mathrm{B}_1$), and ($\mathrm{B}_3$) cannot
occur. Indeed, since $f_i$
belongs to a minimal system of binomial generators of the toric
ideal it cannot be written as a linear combination of binomials of the
toric ideal of degree strictly smaller. In 
Eq.~(\ref{oct27-08}) either $\lambda$ or $\mu_2$ has to be nonzero.  
\end{remark}

The following result will be used in Section~\ref{multigraphs} to
show a class of toric ideals generated by circuits. 

\begin{corollary}\label{main-corollary} 
Let $\A$ be a homogeneous normal
configuration and let $I_\A$ be its toric ideal. If each circuit 
of $I_\A$ with non-square-free terms is balanced, then 
$I_\A$ is generated by a finite set of circuits with a square-free term. 
\end{corollary}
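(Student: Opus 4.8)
The plan is to derive Corollary~\ref{main-corollary} directly from Theorem~\ref{maintheorem} by verifying condition (c) under the stronger hypothesis. So suppose $\A$ is homogeneous and normal, and that every circuit of $I_\A$ with non-square-free terms is balanced. I want to check that every unbalanced circuit of $I_\A$ has a connector which is a $K[T]$-linear combination of circuits of $I_\A$ with a square-free term; once this is done, Theorem~\ref{maintheorem} gives that $I_\A$ is generated by a finite set of circuits, and again by Theorem~\ref{maintheorem} (equivalence of (a) and (b)) these can be taken to have a square-free term.

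First I would let $g$ be an arbitrary unbalanced circuit of $I_\A$. By hypothesis, since $g$ is unbalanced it is in particular not balanced, so $g$ cannot have non-square-free terms; hence $g$ itself has a square-free term. Write $g$ in the normal form used in Definition~\ref{balanced}, namely $g = T_1^{b_1}\cdots T_r^{b_r} - T_{r+1}^{b_{r+1}}\cdots T_s^{b_s}$ with all $b_i \ge 1$ and $m_1 = \max\{b_1,\dots,b_r\} < \max\{b_{r+1},\dots,b_s\} = m_2$. Because $g$ has a square-free term and $m_1 < m_2$, the square-free term must be the first one, so $b_1 = \cdots = b_r = 1$. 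Thus $g = T_1\cdots T_r - T_{r+1}^{b_{r+1}}\cdots T_s^{b_s}$.

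Now I claim that $g$ is its own connector. Checking Definition~\ref{balanced}: a connector of $g$ is a binomial $T_{i_1}\cdots T_{i_j} - T_{i_{j+1}}^{c_{j+1}}\cdots T_{i_m}^{c_m}$ with all $c_i \ge 1$, with square-free first term indexed by a subset of $\{1,\dots,r\}$, and with the index set of the second term meeting $\{r+1,\dots,s\}$ nontrivially. Taking $\{i_1,\dots,i_j\} = \{1,\dots,r\}$ and the second term equal to $T_{r+1}^{b_{r+1}}\cdots T_s^{b_s}$, all three requirements are satisfied: the first term $T_1\cdots T_r$ is square-free with index set exactly $\{1,\dots,r\}$, and $\{r+1,\dots,s\}$ certainly meets itself. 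So $g$ is a connector of $g$. Moreover $g$ is trivially a $K[T]$-linear combination of circuits of $I_\A$ with a square-free term — it is one such circuit, with coefficient $1$. Hence condition (c) of Theorem~\ref{maintheorem} holds, and the corollary follows.

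I do not anticipate a serious obstacle here; the only point that needs a moment's care is the observation that an \emph{unbalanced} circuit is in particular \emph{not balanced}, so the contrapositive of the hypothesis forces it to have a square-free term — and then that this square-free term is necessarily the smaller-exponent one, which is exactly what makes $g$ literally fit the shape of a connector of itself. Everything else is unwinding definitions and invoking Theorem~\ref{maintheorem}.
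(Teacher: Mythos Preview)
Your proof is correct and follows exactly the same approach as the paper's own proof: verify condition (c) of Theorem~\ref{maintheorem} by observing that an unbalanced circuit must, under the hypothesis, have a square-free term and therefore serves as its own connector. The only difference is that you spell out in detail why the square-free term is necessarily the first one and why $g$ literally matches the shape of a connector, whereas the paper simply asserts this.
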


\begin{proof} The circuits of $I_\A$ satisfy condition (c) of 
Theorem~\ref{maintheorem}. Indeed, let $f$ be an unbalanced circuit
of $I_\A$. Then $f$ has a 
square-free term by hypothesis. Thus $f$ is a circuit with a
square-free term and it is a connector of $f$. Hence the result
follows 
from Theorem~\ref{maintheorem}.
\end{proof}

\begin{corollary}\label{another-corollary} 
Let $\A\subset\mathbb{N}^n\setminus\{0\}$ be a homogeneous
configuration 
and let $I_\A$ be its toric ideal. If each circuit 
of $I_\A$ has a square-free term, then $\mathcal{A}$ is normal and 
$I_\A$ is generated by a finite set of circuits with a square-free term. 
\end{corollary}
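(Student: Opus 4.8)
The plan is to reduce the statement to Corollary~\ref{main-corollary}. The only point that needs a separate argument is the normality of $\A$; once $\A$ is known to be normal, the hypothesis of Corollary~\ref{main-corollary} --- that every circuit of $I_\A$ with non-square-free terms be balanced --- holds vacuously, since by assumption there is no circuit of $I_\A$ with non-square-free terms at all. Thus Corollary~\ref{main-corollary} immediately gives that $I_\A$ is generated by a finite set of circuits with a square-free term, which is the second half of the conclusion.

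So the real work is to prove $\Z\A\cap\R_+\A=\N\A$. I would argue by contradiction. Let $\phi$ be a linear functional with $\phi(v_i)=1$ for all $i$ (it exists because $\A$ is homogeneous), and among all $a\in(\Z\A\cap\R_+\A)\setminus\N\A$ choose one with $\phi(a)$ minimal; here $\phi(a)$ is a positive integer. Write $a=\sum_i\lambda_iv_i$ with $\lambda_i\in\mathbb{Q}_{\geq 0}$; by Carath\'eodory's theorem for cones we may assume the set $\{v_i:\lambda_i>0\}$ is linearly independent. Minimality of $\phi(a)$ then forces $0<\lambda_i<1$ on the support of $\lambda$: if $\lambda_{i_0}\geq 1$, then $a-v_{i_0}\in\Z\A\cap\R_+\A$ has strictly smaller $\phi$-degree, hence lies in $\N\A$ by minimality, whence $a\in\N\A$, a contradiction. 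Writing also $a=\sum_i n_iv_i$ with $n_i\in\Z$, the vector $\nu=\lambda-n$ lies in $V=\ker A$ and is nonzero (its entries on the support of $\lambda$ are non-integral), so by Lemma~\ref{harmonyl} there is a circuit $\gamma$ of $V$ in harmony with $\nu$ with $\mathrm{supp}(\gamma)\subseteq\mathrm{supp}(\nu)$; since $\{v_i:\lambda_i>0\}$ is linearly independent, $\gamma$ must have an entry outside this set.

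At this point the hypothesis $\A\subseteq\N^n\setminus\{0\}$ enters through the observation that every circuit $\gamma$ of $V$ has $\gamma_+\neq 0$ and $\gamma_-\neq 0$, since a nonzero sign-definite vector in $\ker A$ would force one of the columns $v_i$ to be the zero vector; and homogeneity gives $\sum_i\gamma_i=0$. Combining these with the harmony relation (on $\mathrm{supp}(\gamma)$ the signs of $\gamma_i$ and $\nu_i$ coincide) and with the assumption that $\gamma$ has a square-free term, one reads off sign constraints on the $n_i$ and $\lambda_i$ over $\mathrm{supp}(\gamma)$, and then an ``exchange along $\gamma$'' --- adjusting either the integral representation $n$ or the representation $a=\sum_i\lambda_iv_i$ by an appropriate multiple of $\gamma$ --- should yield either an honest expression of $a$ in $\N\A$ or a counterexample of strictly smaller $\phi$-degree, in either case a contradiction. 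Making this exchange precise is the main obstacle: one must distinguish the two cases according to whether it is $\gamma_+$ or $\gamma_-$ that is the $\{0,1\}$-valued term of the circuit, and in each case verify that the adjusted coefficients stay nonnegative. This is exactly the step that uses the square-free hypothesis on circuits, and it is where $\A\subseteq\N^n$ --- rather than mere homogeneity --- is needed.
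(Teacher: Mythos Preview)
Your reduction to Corollary~\ref{main-corollary} for the generation statement is correct and mirrors the paper's own argument (the paper invokes condition~(c) of Theorem~\ref{maintheorem} directly, which amounts to the same thing). For normality, however, the paper does not argue from scratch: it simply cites \cite[Theorem~2.3]{unimod}. You instead sketch a direct Carath\'eodory--plus--circuit-exchange argument, and this is where the proposal has a genuine gap.

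The gap is exactly the step you flag yourself: the ``exchange along $\gamma$''. You assert that modifying either the rational representation $\lambda$ or the integral representation $n$ by a suitable multiple of the harmonious circuit $\gamma$ ``should yield'' a contradiction, but you neither specify the modification nor verify nonnegativity of the resulting coefficients. This is not a formality. The circuit $\gamma$ has support meeting both $S=\mathrm{supp}(\lambda)$ and its complement; harmony with $\nu=\lambda-n$ controls only the signs of the differences $\lambda_i-n_i$, not of $\lambda_i$ or $n_i$ separately, so it is not immediate that $\lambda\pm t\gamma\in\mathbb{Q}_{\geq 0}^q$ for any useful $t$, nor that $n\pm\gamma\in\N^q$. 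Knowing that one side of $\gamma$ is $\{0,1\}$-valued helps, but you still owe a case analysis showing how the square-free side of $\gamma$ is positioned relative to $S$ and how this forces either $a\in\N\A$ or a counterexample of smaller $\phi$-degree. That analysis is precisely the content of the cited theorem, and without it the normality claim remains unproved.
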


\begin{proof} The normality of $\A$ follows from
\cite[Theorem~2.3]{unimod}. Since the circuits of $I_\A$ satisfy condition (c) of 
Theorem~\ref{maintheorem}, we get that $I_\A$ is generated by a 
finite set of circuits with a square-free term. 
\end{proof}

\section{Toric ideals of normal edge subrings}\label{multigraphs}
Let $G$ a multigraph with vertex set $X=\{x_1,\ldots,x_n\}$, i.e., $G$
is obtained from a simple graph by allowing multiple edges and
multiple loops.
Thus the edges of $G$ have the form $\{x_i,x_j\}$. If $e=\{x_i,x_j\}$
is an edge of $G$, its characteristic vector is given by
$v_e=e_i+e_j$, where $e_i$ is the $i${\it th} unit vector in
$\mathbb{R}^n$. Notice that if $e$ is a loop, i.e., if $i=j$, 
then $v_e=2e_i$. The {\it incidence matrix} of $G$, denoted by $A$, 
is the matrix whose
column vectors are the characteristic vectors of the edges and loops of $G$.
Since we are allowing multiple edges some of the columns of $A$ may be
repeated. Let $v_1,\ldots,v_q$ be the characteristic vectors of
the edges and loops of $G$ and let $\mathcal{A}=\{v_1,\ldots,v_q\}$ be its
associated vector configuration. 

Let $K[T_1,\ldots,T_q]$ be a polynomial ring 
over a field $K$. The {\it edge subring} of $G$ is the
monomial subring:
$$
K[G]=K[x^{v_1},\ldots,x^{v_q}]\subset K[x_1,\ldots,x_n],
$$
where $K[x_1,\ldots,x_n]$ is a polynomial ring with coefficients in
$K$. 
It is well known that the toric ideal $I_\mathcal{A}$ is the kernel of
the following epimorphism of $K$-algebras
$$
K[T_1,\ldots,T_q]\longrightarrow K[G]
$$
induced by $T_i\mapsto x^{v_i}$. 

\begin{proposition}\label{constraints} If $f=T^a-T^b$ is a circuit of the toric ideal 
$I_\mathcal{A}$, then $f$ has a square-free term or $f$ has
non-square-free terms and $\max_i\{a_i\}=\max_i\{b_i\}=2$.
\end{proposition}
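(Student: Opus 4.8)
The plan is to analyze what a circuit of the incidence matrix $A$ of a multigraph looks like combinatorially, and then read off the bounds on the exponents. First I would recall the standard fact (see \cite{Vi3,monalg}) that the circuits of the incidence matrix of a graph correspond to the primitive binomials coming from closed walks that are \emph{minimal} in the matroid sense: the support of a circuit of $A$ is a minimal dependent set of columns of $A$, and for incidence matrices of graphs a minimal dependent set of edges is either an even cycle, or a pair of odd cycles joined by a path, or a pair of odd cycles sharing exactly one vertex, or a pair of odd cycles that are vertex-disjoint but then they cannot be dependent unless connected — so in fact the only minimal dependent edge sets are even cycles and ``theta-like'' configurations of two odd cycles connected by a (possibly trivial) path, together with the degenerate cases involving loops. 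In each case the associated primitive binomial $T^{\alpha_+}-T^{\alpha_-}$ has a very restricted exponent pattern.

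Next I would go case by case on the shape of the support of $\alpha$. For an even cycle $C$ of length $2k$, the circuit is $\prod_{e\in C_{\rm odd}}T_e-\prod_{e\in C_{\rm even}}T_e$, where $C_{\rm odd},C_{\rm even}$ are the two perfect matchings of $C$; both terms are square-free, so $f$ has a square-free term and we are done. For two odd cycles $C_1,C_2$ joined by a path $P$ (the ``dumbbell''), walking once around $C_1$, twice along $P$, once around $C_2$ gives a closed even walk; the edges of $C_1$ and $C_2$ are used once each and alternate, while the edges of $P$ are used twice, both with multiplicity $2$ on (say) the $T^{\alpha_+}$ side if we are careful about how the alternation meets the path endpoints. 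The key observation is that a path edge traversed twice contributes $T_e^2$ to \emph{one} of the two monomials, never a higher power, because in a minimal walk no edge is repeated more than twice and no vertex forces a triple repetition. When $P$ is nontrivial both monomials then fail to be square-free (the path supplies a square on the $+$ side and, after balancing, a square on the $-$ side as well in the generic dumbbell), giving $\max_i a_i=\max_i b_i=2$. When $P$ is trivial (the two odd cycles share a single vertex), the shared vertex is the only place a variable could exceed multiplicity one, and again it is exactly $2$. The loop cases are immediate: a loop contributes $2e_i$, so a circuit using a loop at $x_i$ must balance that $2$ against an even-cycle-type or loop-type structure at $x_i$, again capping exponents at $2$; and if both terms avoid loops we are in a previous case.

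The step I expect to be the main obstacle is the bookkeeping in the dumbbell case: showing that no edge of the connecting path (or of the cycles) can appear with exponent $\ge 3$ in $\alpha_+$ or $\alpha_-$, and pinning down exactly when a square appears on \emph{both} sides versus only one. This is where minimality of the support is essential — if some edge were used three or more times, or if the walk were not ``taut'', one could delete edges and produce a vector in $\ker A$ with strictly smaller support, contradicting that $\alpha$ is a circuit of $A$. So the argument is: assume $f$ does not have a square-free term, i.e.\ both $T^a$ and $T^b$ have some exponent $\ge 2$; use the combinatorial classification of minimal dependent edge sets of $A$ to see the only possibility is the dumbbell/shared-vertex configuration of two odd cycles (even cycles and single-loop configurations always give a square-free term); and in that configuration a direct count of edge multiplicities in the associated even closed walk shows every exponent is $\le 2$, hence $\max_i a_i=\max_i b_i=2$ since by assumption each side already has an exponent $\ge 2$.
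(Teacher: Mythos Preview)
Your overall strategy---classify the minimal dependent edge sets of the incidence matrix as even cycles or ``dumbbells'' (two odd cycles linked by a path, possibly of length zero), and then read off the exponent pattern of the associated primitive binomial---is exactly the argument the paper is invoking: the paper's own proof simply cites \cite[Corollary~4.1]{Vi3} for the simple-graph case and asserts the multigraph case is identical. So your approach and the paper's coincide in spirit; you are just reconstructing the cited argument.

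That said, two details in your write-up are off and would need fixing before this is a proof.

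First, the shared-vertex case. When two odd cycles meet in a single vertex (path of length zero), every edge appears exactly once in the support and every coefficient of the circuit vector is $\pm 1$; both monomials are square-free. Your sentence ``the shared vertex is the only place a variable could exceed multiplicity one, and again it is exactly~$2$'' confuses vertices with edges: the variables $T_e$ are indexed by edges, and no edge is repeated here.

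Second, the bookkeeping on the path edges. In a dumbbell with a path $P$ of length $s\ge 1$, the path-edge coefficients are not all on one side; they alternate $-2,+2,-2,\ldots$ (with the cycle edges all $\pm 1$). Consequently, when $s=1$ the single path edge contributes a square to \emph{one} monomial only, and the other monomial is square-free---so $f$ still has a square-free term (this is exactly case~(c) in the Remark following Theorem~\ref{toric-gen-by-circuits}). It is only for $s\ge 2$ that both monomials acquire a square, and then the maximum exponent on each side is precisely~$2$. Your hedge ``in the generic dumbbell'' hides this, and the claim that path edges all land on the $T^{\alpha_+}$ side is incorrect. Once these two points are corrected, the case analysis goes through cleanly, including the loop cases (a loop behaves as an odd cycle of length one, so the same alternation argument applies).
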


\begin{proof} If $G$ is a simple graph, the result was shown in
\cite[Corollary~4.1]{Vi3}. The general case, i.e., the multigraph
case, follows using an identical argument as the one given in \cite{Vi3}.
\end{proof}

We come to the main application of this paper.

\begin{theorem}\label{toric-gen-by-circuits} Let $G$ be a multigraph
and let $I_\A$ be the toric 
ideal of the edge subring $K[G]$. Then $K[G]$ is normal if and only if
$I_\A$ is generated by circuits with a square-free term.
\end{theorem}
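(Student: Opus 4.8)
The plan is to prove Theorem~\ref{toric-gen-by-circuits} by combining Corollary~\ref{main-corollary} with the structural description of circuits provided by Proposition~\ref{constraints}. First I would dispose of the easy direction: if $I_\A$ is generated by circuits with a square-free term, then by Lemma~\ref{square-free-term}'s converse situation we need $K[G]$ normal; more directly, the ``only if'' half of Theorem~\ref{maintheorem} — namely that being generated by circuits forces normality in our setting — is not literally stated, so instead I would argue that if $I_\A$ is generated by binomials each having a square-free term, then $K[G]$ is normal. This can be extracted from \cite[Theorem~2.3]{unimod} (as invoked in the proof of Corollary~\ref{another-corollary}), since the edge-subring configuration $\A$ of a multigraph lies in $\mathbb{N}^n\setminus\{0\}$ and is homogeneous (every $v_i$ has coordinate sum $2$): having all circuits with a square-free term is not quite the hypothesis there, so the cleaner route for this direction is to cite the known equivalence for edge subrings or to observe that generation by square-free-term binomials yields an initial ideal that is square-free in a suitable sense, forcing normality.

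For the substantive ``only if'' direction — $K[G]$ normal implies $I_\A$ generated by circuits with a square-free term — I would proceed as follows. The configuration $\A=\{v_1,\ldots,v_q\}$ is homogeneous, since $v_e=e_i+e_j$ (or $2e_i$ for a loop) always has coordinate sum $2$, so $\A$ lies on the hyperplane $\{x : \sum x_k = 2\}$, which misses the origin. By hypothesis $K[G]$ is normal, hence $\A$ is a normal configuration. Now invoke Proposition~\ref{constraints}: every circuit $f = T^a - T^b$ of $I_\A$ either has a square-free term, or has non-square-free terms with $\max_i a_i = \max_i b_i = 2$. In the latter case $f$ is \emph{balanced} in the sense of Definition~\ref{balanced}, since $\max\{a_1,\ldots,a_q\} = 2 = \max\{b_1,\ldots,b_q\}$. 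Therefore every circuit of $I_\A$ with non-square-free terms is balanced — precisely the hypothesis of Corollary~\ref{main-corollary}. Applying that corollary, $I_\A$ is generated by a finite set of circuits with a square-free term, which is what we want.

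I would write the argument compactly as: assume $K[G]$ normal; then $\A$ is homogeneous and normal; by Proposition~\ref{constraints} any circuit with non-square-free terms satisfies $\max_i a_i = \max_i b_i = 2$ and is thus balanced; so by Corollary~\ref{main-corollary}, $I_\A$ is generated by circuits with a square-free term. Conversely, if $I_\A$ is generated by circuits with a square-free term, then in particular it is generated by binomials each of whose terms is square-free or whose generating set consists of such binomials, and normality of $K[G]$ follows — here I would cite the relevant portion of \cite{unimod} or the standard fact that for edge subrings this generation property is equivalent to normality (the ``edge subring'' analogue of \cite[Theorem~2.3]{unimod}).

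The main obstacle I anticipate is the converse direction: Theorem~\ref{maintheorem} as stated only gives ``generated by circuits $\Rightarrow$ normal'' implicitly through its being an equivalence under the standing hypothesis that $\A$ is \emph{already} homogeneous and normal, so it does not by itself deliver normality from scratch. One must therefore supply normality of $K[G]$ from the generation-by-square-free-term-circuits hypothesis by an external input. The cleanest fix is to cite \cite[Theorem~2.3]{unimod}: since $\A \subset \mathbb{N}^n \setminus \{0\}$ is homogeneous, if each circuit of $I_\A$ has a square-free term then $\A$ is normal — and being \emph{generated by} circuits with a square-free term is even stronger (it forces all circuits, being minimal in support, to be of square-free-term type after a short argument, or one simply notes the generation hypothesis suffices for the relevant normality criterion). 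I would double-check that this implication genuinely runs in the needed direction for multigraphs, since that is the only place the argument could slip; the forward direction, by contrast, is a direct and safe application of Proposition~\ref{constraints} and Corollary~\ref{main-corollary}.
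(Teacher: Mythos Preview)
Your forward direction ($K[G]$ normal $\Rightarrow$ $I_\A$ generated by circuits with a square-free term) is exactly the paper's argument: homogeneity plus Proposition~\ref{constraints} plus Corollary~\ref{main-corollary}.

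For the converse, however, your route through \cite[Theorem~2.3]{unimod} does not close. That result (as packaged in Corollary~\ref{another-corollary}) requires that \emph{every} circuit of $I_\A$ have a square-free term, whereas your hypothesis is only that $I_\A$ is \emph{generated by} such circuits. These are not equivalent: by Proposition~\ref{constraints} a multigraph can have circuits with both terms non-square-free (two disjoint odd cycles joined by a path of length $\geq 2$), and the presence of such circuits is perfectly compatible with $I_\A$ being generated by the square-free-term ones. Your parenthetical claim that generation ``forces all circuits \ldots\ to be of square-free-term type after a short argument'' is not correct, and you rightly flag this as the place the argument could slip.

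The paper sidesteps this entirely: for the converse it cites the explicit description of the integral closure of $K[G]$ from \cite{bowtie} (cf.\ \cite[Proposition~5.9]{BGS}). That description identifies precisely which monomials must be adjoined to $K[G]$ to normalize it, in terms of pairs of vertex-disjoint odd cycles; if $I_\A$ is generated by circuits with a square-free term, one checks directly against that description that no adjunction is needed. So the external input you anticipated is indeed required, but it is the integral-closure result for edge subrings from \cite{bowtie}, not \cite{unimod}.
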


\begin{proof} $\Rightarrow$) This direction follows at once applying
Proposition~\ref{constraints} and Corollary~\ref{main-corollary}.

$\Leftarrow$) It is seen using a description of the
integral closure of the edge subring $K[G]$ given in \cite{bowtie} (cf.
\cite[Proposition~5.9]{BGS}).
\end{proof}

\begin{definition} A sub-multigraph $H$ of $G$ is called a {\it
circuit\/} of $G$ if $H$ has one 
of the following forms:
\begin{enumerate}
\item[\rm (a)] $H$ is an even cycle. 

\item[\rm (b)] $H$ consists of two odd cycles intersecting in
exactly one vertex; a loop is regarded as an odd cycle of length $1$. 

\item[\rm (c)] $H$ consists of two vertex disjoint odd cycles joined
by a path.
\end{enumerate}
\end{definition}

The circuits of $G$ are in one to one correspondence 
with the circuits of $I_\A$ as we now explain, see \cite{Vi3} for a
detailed discussion. Any circuit $H$ of $G$ can be regarded as an even
closed walk
$$
w=\{w_0,w_1,\ldots,w_r,w_0\},
$$
where $r$ is even, $w_0,w_1,\ldots,w_r$ are the vertices of $H$ (we
allow repetitions) and $\{w_i,w_{i+1}\}$ is an edge or loop of $G$
for all $i$. Then the binomial $T_w=T_1T_3\cdots T_{r-1}-T_2T_4\cdots
T_r$ is in $I_\A$, where $f_i=w_{i-1}w_i$ and $T_i$ maps to $f_i$ for
all $i$. 

\begin{remark} The circuits of $I_\A$ with a
square-free term correspond to the following types of circuits of $G$:

\begin{enumerate}
\item[\rm (a)] Even cycles. 

\item[\rm (b)] Two odd cycles intersecting in
exactly one vertex.

\item[\rm (c)] Two vertex disjoint odd cycles joined by an edge.
\end{enumerate}
Thus if $K[G]$ is a normal subring, by Theorem~\ref{toric-gen-by-circuits}
we obtain a very precise
graph theoretical description of a generating set of circuits for $I_\A$. 
\end{remark}

Toric ideals of edge subrings of oriented graphs were studied in
\cite{GRV,ringraphs}. 
In this case the toric ideal is also generated by circuits
and the circuits correspond to the cycles of the graph. 

\bigskip

\noindent
{\bf Acknowledgments.} We thank the referees for a
careful reading of the paper and for the improvements suggested. 

\medskip

\noindent {\it Note added in proof\/}: Apostolos Thoma has pointed out to us that the
condition of Theorem 3.2 is not sufficient for the normality of
the edge subring. A sufficient condition is the one given
in Corollary 2.9.

\bibliographystyle{plain}

\end{document}